\newcommand{\st}{\mathrm{s.}~\mathrm{t.}}
\newcommand{\lrbrace}[1]{\left\{#1\right\}}
\newcommand{\lrbracket}[1]{\left(#1\right)}
\newcommand{\T}{\top}
\newcommand{\trace}{\mathrm{Tr}}
\newcommand{\eps}{\varepsilon}
\newcommand{\calF}{\mathcal{F}}
\newcommand{\mb}[1]{\mathbf{#1}}
\newcommand{\snorm}[1]{\Vert#1\Vert}
\newcommand{\abs}[1]{\left|#1\right|}
\newcommand{\inner}[1]{\left\langle#1\right\rangle}
\newcommand{\R}{\mathbb{R}}
\newcommand{\N}{\mathbb{N}}
\newcommand{\dist}{\mathrm{dist}}
\newcommand{\rbd}{\mathrm{rbd}~}
\newcommand{\h}{\mathbf{h}}
\newcommand{\w}{\mathbf{w}}
\newcommand{\x}{\mathbf{x}}
\newcommand{\y}{\mathbf{y}}
\newcommand{\z}{\mathbf{z}}
\newcommand{\vv}{\mathrm{vec}}
\newcommand{\supp}{\mathrm{supp}}
\newcommand{\one}{\mathbf{1}}
\newcommand{\opt}{\mathrm{opt}}
\newcommand{\labeltarget}[1]{\Hy@raisedlink{\hypertarget{#1}{}}}
\DeclareMathOperator*{\argmin}{arg\,min}
\newtheorem{assume}{Assumption}
\newtheorem{lem}{Lemma}
\newtheorem{thm}{Theorem}
\Crefname{assume}{Assumption}{Assumptions}
\Crefname{coro}{Corollary}{Corollarys}
\Crefname{figure}{Figure}{Figures}
\Crefname{lem}{Lemma}{Lemmas}
\Crefname{thm}{Theorem}{Theorems}
\crefname{subsection}{subsection}{subsections}
\numberwithin{equation}{section}
\title{The Exactness of the $\ell_1$ Penalty Function for a Class of Mathematical Programs with Generalized Complementarity Constraints}
\date{\today}
\author{Yukuan Hu\thanks{State Key Laboratory of Scientific and Engineering Computing, Academy of Mathematics and Systems Science, Chinese Academy of Sciences, Beijing 100190, China and University of Chinese Academy of Sciences, Beijing 100049, China (\href{mailto:ykhu@lsec.cc.ac.cn}{ykhu@lsec.cc.ac.cn}, \href{mailto:liuxin@lsec.cc.ac.cn}{liuxin@lsec.cc.ac.cn}).} \and Xin Liu\footnotemark[1]}
\begin{document}
	
	\maketitle
	
	\begin{abstract}
	In a Mathematical Program with Generalized Complementarity Constraints (MPGCC), complementarity relationships are imposed between each pair of variable blocks. MPGCC includes the traditional Mathematical Program with Complementarity Constraints (MPCC) as a special case. On account of the disjunctive feasible region, MPCC and MPGCC are generally difficult to handle. The $\ell_1$ penalty method, often adopted in computation, opens a way of circumventing the difficulty. Yet it remains unclear about the exactness of the $\ell_1$ penalty function, namely, whether there exists a sufficiently large penalty parameter so that the penalty problem shares the optimal solution set with the original one. In this paper, we consider a class of MPGCCs that are of multi-affine objective functions. This problem class finds applications in various fields, e.g., the multi-marginal optimal transport problems in many-body quantum physics and the pricing problem in network transportation. We first provide an instance from this class, the exactness of whose $\ell_1$ penalty function cannot be derived by existing tools.	We then establish the exactness results under rather mild conditions. Our results cover those existing ones for MPCC and apply to multi-block contexts.
	\end{abstract}

	\section{Introduction}
	
	\par A Mathematical Program with Complementarity Constraints (MPCC) 
	takes the form 
	\begin{equation}
		\begin{array}{cl}
			\min\limits_{\x,\y} & g(\x,\y)\\
			\st & \h(\x,\y)\le0,\\
			 & \x\ge0,~\y\ge0,~\inner{\x,\y}=0,
		\end{array}
		\label{eqn:general MPCC}
	\end{equation}
	where $\x$, $\y\in\R^m$, $g:\R^{2m}\to\R$, and $\h:\R^{2m}\to\R^q$. MPCC has found wide applications in economics and engineering design. For a review on this topic, interested readers may refer to \cite{luo1996mathematical,ralph2007nonlinear} and the references therein. If $g$ and $\h$ are affine, the MPCC \cref{eqn:general MPCC} reduces to a Linear Program with Complementarity Constraints (LPCC). For problems with multiple variable blocks, 
	the complementarity relationships imposed between each block pair leads to the following Mathematical Program with Generalized Complementarity Constraints (MPGCC) 
	\begin{equation}
		\begin{array}{cl}
			\min\limits_{\x_1,\ldots,\x_n} & \tilde g(\x_1,\ldots,\x_n)\\
			\st & \tilde \h(\x_1,\ldots,\x_n)\le0,\\
			 & \x_i\ge0,~i=1,\ldots,n,\\
			 & \inner{\x_i,\x_j}=0,~\forall~i,j\in\{1,\ldots,n\}:i\ne j,
		\end{array}
		\label{eqn:general MPGCC}
	\end{equation}
	where $\x_i\in\R^m$ for $i=1,\ldots,n$, $\tilde g:\R^{mn}\to\R$, and $\tilde\h:\R^{mn}\to\R^q$.
	
	\par It is worth mentioning that MPCC \cref{eqn:general MPCC} and MPGCC \cref{eqn:general MPGCC} violate standard constraint qualifications at any feasible point, such as the Mangasarian-Fromovitz constraint qualification (MFCQ) \cite{flegel2005guignard}. Consequently, the local solutions, and of course optimal solutions, do not necessarily satisfy the associated Karush-Kuhn-Tucker conditions, which renders these problems particularly difficult to cope with. It can also be proved that globally solving a general MPCC or MPGCC is NP-complete \cite{chung1989np,hansen1992new,jeroslow1985polynomial}. To this end, several tailored constraint qualifications and stationarity notions have been proposed for MPCC; see \cite{flegel2005abadie,flegel2005guignard,flegel2006direct,guo2021mathematical,outrata1999optimality,scheel2000mathematical,ye2005necessary} for example. 
	
	\par Instead of facing the original MPCC \cref{eqn:general MPCC} or MPGCC \cref{eqn:general MPGCC}, one could alternatively penalize the (generalized) complementarity constraints using $\ell_1$ penalty term, and then turn to consider the penalty problem. Taking the MPGCC \cref{eqn:general MPGCC} as an instance, we write its $\ell_1$ penalty counterpart as 
	\begin{equation}
		\begin{array}{cl}
			\min\limits_{\x_1,\ldots,\x_n} & \tilde g_\beta(\x_1,\ldots,\x_n):=\tilde g(\x_1,\ldots,\x_n)+\beta p(\x_1,\ldots,\x_n)\\
			\st & \tilde\h(\x_1,\ldots,\x_n)\le0;~\x_i\ge0,~i=1,\ldots,n,
		\end{array}
		\label{eqn:general MPGCC penalty}
	\end{equation}
	where $\beta>0$ refers to the penalty parameter and the $\ell_1$ penalty term
	\begin{equation}
		p(\x_1,\ldots,\x_n):=\sum_{i=1}^n\sum_{j>i}\inner{\x_i,\x_j},\quad\forall~\x_1,\ldots,\x_n\in\R^m.
		\label{eqn:penalty term}
	\end{equation}
	The penalty problem \cref{eqn:general MPGCC penalty} enjoys several nice properties: (i) there is no need to take absolute value for $p$ because of the nonnegative constraints and, therefore, no nonsmoothness is introduced; (ii) the problem \cref{eqn:general MPGCC penalty} is free of complementarity constraints and standard constraint qualifications may readily hold. Nevertheless, it remains unclear about the exactness of the $\ell_1$ penalty function, i.e., whether or not the \textit{optimal solution sets} of \cref{eqn:general MPGCC} and \cref{eqn:general MPGCC penalty} coincide for all sufficiently large $\beta$.
	
	\par In this paper, we consider a class of MPGCC that reads
	\begin{equation}
		\begin{array}{cl}
			\displaystyle\min_{\x_1,\ldots,\x_n} & \displaystyle f(\x_1,\ldots,\x_n)\\
			\st & \x_i\in\Omega_i,~\x_i\ge0,~i=1,\ldots,n,\\
			& \inner{\x_i,\x_j}=0,~\forall~i,j\in\{1,\ldots,n\}:i\ne j.
		\end{array}
		\tag{P}
		\labeltarget{MPGCC}
		\label{eqn:MPGCC}
	\end{equation}
	Here $f:\R^{mn}\to\R$ is \textit{multi-affine}, i.e., for $i=1,\ldots,n$, it is affine with respect to $\x_i$ after fixing the other $n-1$ blocks. The sets $\{\Omega_i\}_{i=1}^n$ are polyhedrons in $\R^{m}$. In the sequel, we denote the feasible region of \cref{eqn:MPGCC} by $\calF$ and let $\z:=(\x_1,\ldots,\x_n)\in\R^{mn}$, $\Omega_i^+:=\Omega_i\cap\R^m_+$ ($i=1,\ldots,n$) for brevity.
	
	\par The relationships among the general LPCC, MPCC, MPGCC as well as the scope of the model \cref{eqn:MPGCC} are depicted in \Cref{fig:relationship}. The cyan ellipsoid with solid boundary stands for MPGCC, the larger cyan disk with dashed boundary for MPCC, and the smaller cyan disk with dotted boundary for LPCC. The red ellipsoid with dashdotted boundary refers to the scope of the model \cref{eqn:MPGCC}. 
	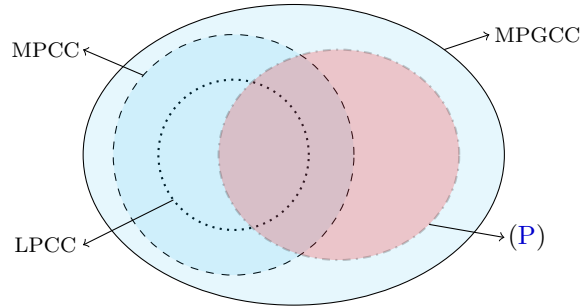
\begin{figure}[htbp]
		\centering
		\begin{tikzpicture}[set/.style={fill=cyan,fill opacity=0.1},scale=2]
			\draw[set,fill=white,dotted,line width=0.8pt] (0,0) ellipse (0.5cm and 0.5cm);
			\draw[set,dashed] (0,0) ellipse(0.8cm and 0.8cm);
			\draw[set] (0.4,0) ellipse(1.4cm and 1cm);
			\draw[set,fill=red,opacity=0.2,line width=0.8pt,dashdotted] (0.7,0) ellipse (0.8cm and 0.7cm);
			\draw[->] (-0.6cm,0.529cm) -- (-1cm,0.7cm);
			\node at (-1.25cm,0.7cm) {\footnotesize MPCC};
			\draw[->] (-0.4cm,-0.3cm) -- (-1cm,-0.6cm);
			\node at (-1.25cm,-0.6cm) {\footnotesize LPCC};
			\draw[->] (1.4cm,0.7cm) -- (1.7cm,0.8cm);
			\node at (2.02cm,0.8cm) {\footnotesize MPGCC};
			\draw[->] (1.3cm,-0.463cm) -- (1.8cm,-0.55cm);
			\node at (1.95cm,-0.55cm) {(\hyperlink{MPGCC}{P})};
		\end{tikzpicture}
		\caption{Relationships among the general LPCC, MPCC, MPGCC as well as the scope of \cref{eqn:MPGCC}.}
		\label{fig:relationship}
	\end{figure}
	
	\par The model \cref{eqn:MPGCC} can be found in various fields. In \cite{hu2021global,hu2022convergence}, the authors formulate the discretized multi-marginal optimal transport problems arising in quantum physics under the so-called Monge-like ansatz into an MPGCC as \cref{eqn:MPGCC}. The generalized complementarity constraints are present to prevent the unfavorable clustering of electrons. As an LPCC or MPCC, \cref{eqn:MPGCC} is able to model sequential decision processes such as pricing in network taxation \cite{brotcorne2000bilevel,brotcorne2001bilevel,labbe1998bilevel,labbe2000class}, biofuel production \cite{bard2000bilevel}, airline industry \cite{cote2003bilevel}, and telecommunication services \cite{bouhtou2009continuous,bouhtou2006pricing,bouhtou2007joint}. 

	\par Similar to \cref{eqn:general MPGCC penalty}, the $\ell_1$ penalized \cref{eqn:MPGCC} reads
	\begin{equation}
		\min_{\z}~f_\beta(\z):=f(\z)+\beta p(\z),\quad\st~\x_i\in\Omega_i^+,~i=1,\ldots,n.
		\tag{P$_\beta$}
		\label{eqn:MPGCC penalty}
	\end{equation}
	Recall that $p$ is defined in \cref{eqn:penalty term}. We denote the feasible region of \cref{eqn:MPGCC penalty} as $\Omega := \prod_{i=1}^n\Omega_i^+$, where ``$\prod$'' stands for the Cartesian product among sets.
		
	\par The aim of this paper is to establish the exactness of the $\ell_1$ penalty function for \cref{eqn:MPGCC} via exploring the relationship between the optimal solution sets of \cref{eqn:MPGCC} and \cref{eqn:MPGCC penalty}. The global optimization algorithms for \cref{eqn:MPGCC} and \cref{eqn:MPGCC penalty} go beyond the range of this work.
	
	\subsection{Literature Review}
	
	\par We review in this part the literatures on the exactness of the $\ell_1$ penalty function for MPGCC. These existing results can be divided into two parts: one is devoted to the general MPCC \cref{eqn:general MPCC}, while the other focuses on the special cases of the model \cref{eqn:MPGCC}. The limitations are gathered at the end of each part, from which we draw our motivation.
	
	\par For the general MPCC \cref{eqn:general MPCC}, one could establish the exactness of the $\ell_1$ penalty function by imposing additional regularity assumptions. In \cite{luo1996mathematical,luo1996exact}, the authors analyze the exactness with the aid of the strict complementarity condition and the following error bound
	\begin{equation}
		\exists~\tau>0,~\st~\dist\lrbracket{(\x,\y),\tilde\calF}\le\tau\inner{\x,\y},\quad\forall~(\x,\y)\in\tilde\Omega,
		\label{eqn:error bound}
	\end{equation}
	where $\tilde\calF$ and $\tilde\Omega$ refer to the feasible regions of \cref{eqn:general MPGCC} and \cref{eqn:general MPGCC penalty}, respectively. Here the strict complementarity condition means that $\x+\y>0$ for any $(\x,\y)\in\tilde\calF$. Later on, the authors of \cite{liu2001exact} establish the exactness under the so-called positive-multiplier nondegeneracy condition and the MFCQ of the penalty problem. The above mentioned regularity assumptions may appear to be restrictive. The strict complementarity condition in \cite{luo1996mathematical,luo1996exact} can easily fail for a general MPCC \cite{hu2021global,liu2001exact}. Moreover, it is nontrivial to check priorly whether the nondegeneracy condition in \cite{liu2001exact} holds at the points of interest. As for the general MPGCC \cref{eqn:general MPGCC}, the theoretical properties of the $\ell_1$ penalty function have not yet been investigated.
	
	\par There are also works dedicated to the $\ell_1$ exact penalty for \cref{eqn:MPGCC} with affine objectives and two variable blocks. The authors of \cite{campelo2000note,campelo2001theoretical} show the exactness based upon the finiteness of extreme point sets; see also earlier works \cite{anandalingam1990solution,labbe1998bilevel,labbe2000class,white1993penalty}, where any optimal solution of \cref{eqn:MPGCC} is proved to solve \cref{eqn:MPGCC penalty} but the reverse direction is ignored. All the works just mentioned concentrate on \cref{eqn:MPGCC} with affine objectives and two variable blocks, while no theoretical results are known for the $n>2$ cases or when the objectives are nonlinear. Nevertheless, the authors of \cite{hu2021global} report rather encouraging numerical results in solving an MPGCC as \cref{eqn:MPGCC} via the $\ell_1$ penalty method. 
   
	\par To sum up, the limitations of the existing works motivate us to pursue the $\ell_1$ penalty exactness result on \cref{eqn:MPGCC} with arbitrary $n\geq 2$ and nonlinear objectives under weaker assumptions than those in \cite{liu2001exact,luo1996mathematical,luo1996exact}. 
	
	\subsection{Contributions}
	
	\par We provide an example of \cref{eqn:MPGCC}, the exactness of whose $\ell_1$ penalty function cannot be implied by the existing results. Leveraging on the special structure of \cref{eqn:MPGCC}, we show the exactness of the $\ell_1$ penalty function under a rather mild assumption. Our results cover those for LPCC in \cite{anandalingam1990solution,campelo2000note,campelo2001theoretical,labbe1998bilevel,labbe2000class,white1993penalty} and applies to the multi-block settings with nonlinear objectives. For a view of our position, please refer to the red ellipsoid in \Cref{fig:relationship}.
	
	\subsection{Notations and Organization}\label{subsec:notation and organization}
	
	\par We denote scalars, vectors, and matrices by lower-case letters, bold lower-case letters, and upper-case letters, respectively. The notations $\one$ and $I$ stand for the all-one vector and identity matrix in proper dimension, respectively. The support of a matrix $X=(X_{ij})$ is presented by $\supp(X):=\{(i,j):X_{ij}\ne0\}$. We use ``$\vv(\cdot)$'' to vectorize matrices by column stacking. The Kronecker product between two matrices is denoted by ``$\otimes$''. The operator ``$\inner{\cdot,\cdot}$'' represents the standard inner product of two vectors or matrices, while ``$\snorm{\cdot}$'' represents the induced norm. 
	
	\par We denote respectively the extreme point sets of $\Omega_i^+$ by $\overline{\Omega_i^+}$ for $i=1,\ldots,n$, the extreme point set of $\Omega$ by $\bar\Omega$, the optimal solution set of \cref{eqn:MPGCC} by $S^{\opt}$, and the optimal solution set of \cref{eqn:MPGCC penalty} by $S_\beta^{\opt}$ for any $\beta\in\R$. Let $\bar S^{\opt}:=\bar\Omega\cap S^{\opt}$ and $\bar S_\beta^{\opt}:=\bar\Omega\cap S_\beta^{\opt}$ denote the extreme-point optimal solution of \cref{eqn:MPGCC} and \cref{eqn:MPGCC penalty}, respectively. The cardinality of a set $S$ is represented by $\abs{S}$. The line segment connected by two points, $\mb{a}$ and $\mb{b}$, is denoted by $\ell(\mb{a},\mb{b}):=\{(1-\lambda)\mb{a}+\lambda\mb{b}:\lambda\in[0,1]\}$. The notation $\rbd S$ refers to the relative boundary of a set $S$. The distance between a point $\mb{a}$ and a closed set $S$ is represented by $\dist(\mb{a},S):=\min_{\w\in S}\snorm{\mb{a}-\w}$. 
	
	\par We organize this paper as follows. The example of \cref{eqn:MPGCC} falling outside the literatures is detailed in \cref{sec:instance}. The main results of the $\ell_1$ penalty exactness on \cref{eqn:MPGCC} are elaborated in \cref{sec:main results}. We draw conclusions and perspectives in \cref{sec:conclusions}.
	
	\section{The $\ell_1$ Penalty Exactness beyond the Existing Works}\label{sec:instance}
	
	\par We give an instance of \cref{eqn:MPGCC}, related to real applications \cite{hu2021global,hu2022convergence}, whose $\ell_1$ penalty exactness cannot be implied by the existing works. Specifically, it is not an LPCC analyzed in \cite{anandalingam1990solution,campelo2000note,campelo2001theoretical,labbe1998bilevel,labbe2000class,white1993penalty}. Moreover, neither the strict complementarity condition nor the error bound \cref{eqn:error bound} in \cite{luo1996mathematical,luo1996exact} is satisfied. 
	
	\par Consider the following MPCC with three matrix variables
	\begin{equation}
		\begin{array}{cl}
			\min\limits_{Z} & \displaystyle\sum_{i=1}^3\inner{X_i,C}+\sum_{i=1}^3\sum_{j>i}\inner{X_i,X_jC}\\
			\st & X_i\one=\one,~X_i^\T\one=\one,~\trace(X_i)=0,~X_i\ge0,~i=1,2,3,\\
			& \inner{X_1,X_2}=0,~\inner{X_1,X_3}=0,~\inner{X_2,X_3}=0,
		\end{array}
		\label{eqn:example}
	\end{equation}
	where $K\in\N$, $X_1$, $X_2$, $X_3\in\R^{K\times K}$, $Z:=(X_1,X_2,X_3)\in\prod_{i=1}^3\R^{K\times K}$, $C=(C_{ij})\in\R^{K\times K}$ is defined as
	$$C_{ij}=\left\{\begin{array}{ll}
		\dfrac{1}{\abs{i-j}h}, & \text{if}~i\ne j,\\
		0,                   & \text{otherwise}
	\end{array}\right.~\text{with}~h:=\frac{1}{K}.$$
	The problem \cref{eqn:example} is in the form of \cref{eqn:MPGCC} with $n=3$, $m=K^2$, $\x_i=\vv(X_i)$ ($i=1,2,3$), $f(\z)=\sum_{i=1}^3\inner{\mb{a},\x_i}+\sum_{i=1}^3\sum_{j>i}\inner{\x_i,R\x_j}$, $\mb{a}=\vv(C)$, $R=\frac{1}{2}\begin{bmatrix}
		0 & C\otimes I\\
		C\otimes I & 0
	\end{bmatrix}$, and 
	$$\Omega_1=\Omega_2=\Omega_3=\lrbrace{\w\in\R^{K^2}:\begin{bmatrix}
		\one^\T\otimes I\\
		I\otimes\one^\T \\
		\vv(I)^\T
	\end{bmatrix}\w=\begin{bmatrix}
		\one\\\one\\0
	\end{bmatrix}},~\calF=\lrbrace{\z\in\prod_{i=1}^3\Omega_i^+:p(\z)=0}.$$
	This instance is derived from the one-dimensional multi-marginal optimal transport model arising in many-body quantum physics \cite{hu2021global,hu2022convergence}, particularly with four electrons, constant density, and uniform discretization over the interval $[0,1]$. By \cite{colombo2015multimarginal}, if $K$ can be divided by $4$, we are able to explicitly write down one optimal solution $Z^*:=(X_1^*,X_2^*,X_3^*)$ to \cref{eqn:example}:
	\begin{align}
		(X_1^*)_{ij}&=\left\{\begin{array}{ll}
			1, & \text{if}~i-j=\frac{3K}{4}~\text{or}~j-i=\frac{K}{4},\\
			0, & \text{otherwise},
		\end{array}\right.\nonumber\\
		(X_2^*)_{ij}&=\left\{\begin{array}{ll}
			1, & \text{if}~i-j=\frac{K}{2},~\text{or}~j-i=\frac{K}{2},\\
			0, & \text{otherwise},
		\end{array}\right.\label{eqn:optimal solution}\\
		(X_3^*)_{ij}&=\left\{\begin{array}{ll}
			1, & \text{if}~i-j=\frac{K}{4}~\text{or}~j-i=\frac{3K}{4},\\
			0, & \text{otherwise}.
		\end{array}\right.\nonumber
	\end{align}
	An illustration of $Z^*$ can be found in \Cref{fig:solution}.
	
	\begin{figure}[htbp]
		\centering
		\includegraphics[width=.9\textwidth]{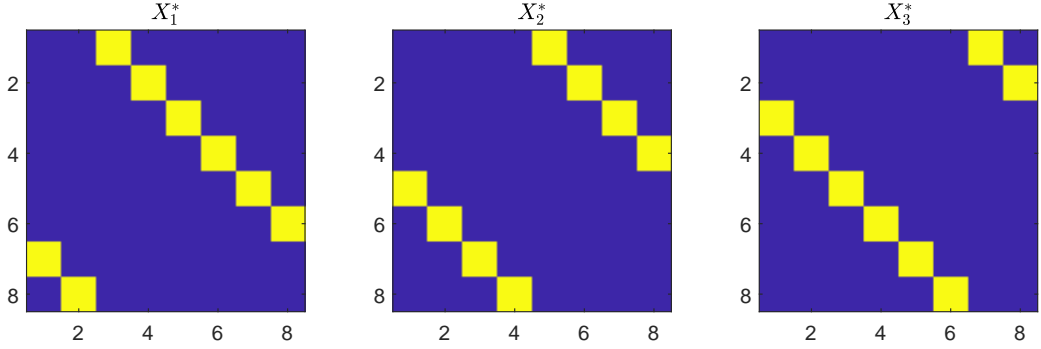}
		\caption{The illustration of $Z^*$ when $K=8$. \textit{Each yellow block stands for the value of $1$, while each deep blue block for the value of $0$.}}
		\label{fig:solution}
	\end{figure}
	
	\par Since $n>2$ and $f$ is nonlinear, \cref{eqn:example} cannot be covered by the existing works on the special cases of \cref{eqn:MPGCC}. Moreover, the strict complementarity condition is violated on \cref{eqn:example} due to the zero-trace constraints. Next, we argue that the error bound \cref{eqn:error bound} fails to hold on \cref{eqn:example} whenever $K>4$ and $\bmod(K,4)=0$. 
	
	\begin{thm}\label{thm:error bound fail}
		For \cref{eqn:example} with $K:K>4$ and $\bmod(K,4)=0$, there does not exist $\tau>0$ such that\footnote{With a slight abuse of notation, we still use $p$ as the $\ell_1$ penalty term with matrix variables.}
		\begin{equation}
			\dist(Z,\calF)\le\tau p(Z),\quad\forall~Z\in\Omega.
			\label{eqn:error bound new}
		\end{equation}
	\end{thm}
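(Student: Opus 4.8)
The plan is to disprove the error bound by exhibiting a sequence $\{Z^{(k)}\}\subset\Omega$ along which the penalty value $p(Z^{(k)})$ decays strictly faster than the distance $\dist(Z^{(k)},\calF)$. A natural candidate is a path that interpolates between the optimal solution $Z^*$ in \cref{eqn:optimal solution} and a nearby point that perturbs two of the three permutation matrices toward overlapping in a single entry, while still satisfying the doubly-stochastic and zero-trace constraints encoded in $\Omega_i$. Concretely, I would pick a small parameter $t\ge 0$ and define $X_1(t),X_2(t)$ by adding $t$ times a suitable ``rotation'' pattern (a combination of $2\times 2$ or $4\times 4$ cycles among off-diagonal entries, chosen to keep row sums, column sums, and the trace fixed) that moves mass of order $t$ into a position where $X_1$ and $X_2$ share support. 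Then $\inner{X_1(t),X_2(t)}$ picks up a contribution of order $t$ from the newly overlapping entry — but the point is to arrange, in fact, that the overlap term is of order $t^2$: the two matrices each carry mass $\sim t$ on a common slot, so their inner product there is $\sim t^2$, while remaining pairwise products $\inner{X_1(t),X_3}$, $\inner{X_2(t),X_3}$ stay at $0$ by keeping the perturbation off the support of $X_3^*$. Hence $p(Z(t)) = O(t^2)$.

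The second half of the argument is the lower bound $\dist(Z(t),\calF)\ge c\,t$ for some constant $c>0$ independent of $t$ (for all small $t$). For this I would argue that any feasible $Z\in\calF$ must have $X_1,X_2$ with \emph{disjoint} supports (since $X_i\ge0$ and $\inner{X_1,X_2}=0$ force $\supp(X_1)\cap\supp(X_2)=\emptyset$), whereas $Z(t)$ has an entry of size $\sim t$ in a slot of $X_1$ that also carries mass $\sim t$ in $X_2$; projecting onto $\calF$ must therefore kill at least one of these two entries, incurring a change of at least $\sim t$ in Frobenius norm. Combining $p(Z(t))=O(t^2)$ with $\dist(Z(t),\calF)\ge ct$ shows that no finite $\tau$ can satisfy \cref{eqn:error bound new} as $t\downarrow 0$. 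The hypotheses $K>4$ and $\bmod(K,4)=0$ enter precisely to guarantee there is ``room'' to perform such a trace-preserving, doubly-stochastic perturbation that creates an $X_1$–$X_2$ overlap without touching $\supp(X_3^*)$; when $K=4$ the three permutation matrices are too rigid for this to be possible.

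The main obstacle I anticipate is the explicit construction of the perturbation direction: it must simultaneously (i) keep $X_1(t),X_2(t)\ge 0$ for small $t$, (ii) preserve all equality constraints in $\Omega_1=\Omega_2$ (row sums, column sums, zero trace), (iii) create a \emph{common} support entry between $X_1(t)$ and $X_2(t)$ so that $p$ is quadratic rather than linear in $t$, and (iv) avoid $\supp(X_3^*)$ so the other two complementarity terms remain exactly zero. Verifying (i)–(iv) is the combinatorial heart of the proof; once a valid direction is written down, the estimates $p(Z(t))=O(t^2)$ and $\dist(Z(t),\calF)=\Omega(t)$ are routine. An alternative, possibly cleaner, route is to keep the perturbation \emph{linear} in $t$ (so $p(Z(t))\sim a t$ with $a>0$) but design two separate perturbation scales — moving $X_1$ by $t$ and $X_2$ by $t^2$ into a shared slot — which makes the overlap $\sim t^3$ while the distance to $\calF$ is still $\sim t^2$; either way the ratio $\dist/p$ blows up, and I would present whichever construction makes the constraint-bookkeeping shortest.
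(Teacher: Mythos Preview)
Your proposal is correct and follows essentially the same approach as the paper: perturb two of the three blocks by $O(t)$ into a shared off-diagonal slot so that $p=O(t^{2})$, then use the disjoint-support property forced by $\calF$ to obtain $\dist\ge ct$. The paper's explicit construction perturbs $X_{1}$ and $X_{3}$ (keeping $X_{2}^{*}$ fixed) by shifting mass to the adjacent diagonal $\{i-j=3K/4-1\}\cup\{j-i=K/4+1\}$, giving $p(Z(\eps))=K\eps^{2}$ exactly, and its lower bound is handled by a two-case analysis that is slightly less direct than your shared-entry argument.
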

	
	\begin{proof}
		In light of the optimal solution $Z^*$ in \cref{eqn:optimal solution}, for any $\eps\in(0,1)$, we come up with $Z(\eps):=(X_1(\eps),X_2^*,X_3(\eps))$ (see \Cref{fig:counterexm} for illustration) where $X_1(\eps)$ and $X_3(\eps)$ are defined as
		\begin{align*}
			(X_1(\eps))_{ij}&=\left\{\begin{array}{ll}
				1-\eps, & \text{if}~i-j=\frac{3K}{4}~\text{or}~j-i=\frac{K}{4},\\
				\eps,   & \text{if}~i-j=\frac{3K}{4}-1~\text{or}~j-i=\frac{K}{4}+1,\\
				0,      & \text{otherwise},
			\end{array}\right.\\
			(X_3(\eps))_{ij}&=\left\{\begin{array}{ll}
				1-\eps, & \text{if}~i-j=\frac{K}{4}~\text{or}~j-i=\frac{3K}{4},\\
				\eps,   & \text{if}~i-j=\frac{3K}{4}-1~\text{or}~j-i=\frac{K}{4}+1,\\
				0,      & \text{otherwise}.
			\end{array}\right.
		\end{align*}
		It is easy to verify that $Z(\eps)\in\Omega$ but $p(Z(\eps))=K\eps^2$ for any $\eps\in(0,1)$ (since $K>4$), and $Z(\eps)\to Z^*$ as $\eps\to0+$. 
		
		\begin{figure}[htbp]
			\centering
			\includegraphics[width=.6\textwidth]{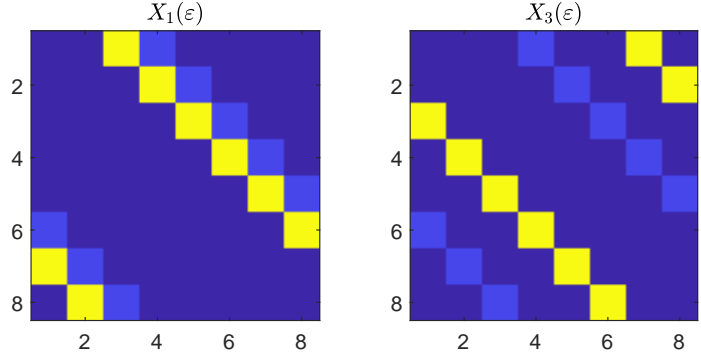}
			\caption{The illustration of $X_1(\eps)$ and $X_3(\eps)$ when $K=8$. \textit{Each yellow block stands for the value of $1-\eps$, each light blue block for the value of $\eps$, while each deep blue block for the value of $0$.}}
			\label{fig:counterexm}
		\end{figure}
		
		\par If it happens that $Z^*\in\argmin_{Z\in\calF}\snorm{Z-Z(\eps)}$, then 
		$$\dist(Z(\eps),\calF)=\snorm{Z^*-Z(\eps)}=2\sqrt{K}\eps=\frac{2}{\sqrt{K}\eps}p(Z(\eps)).$$
		Passing $\eps\to0+$ yields the nonexistence of a uniform $\tau>0$ in \cref{eqn:error bound new}.
		
		\par If $Z^*\notin\argmin_{Z\in\calF}\snorm{Z-Z(\eps)}$, we will show that 
		\begin{equation}
			\limsup_{\eps\to0+}\frac{\dist(Z(\eps),\calF)}{\eps}>0,
			\label{eqn:tmp1}
		\end{equation}
		leading again to the failure of \cref{eqn:error bound new}. Suppose otherwise that 
		\begin{equation}
			\dist(Z(\eps),\calF)=\snorm{\tilde Z(\eps)-Z(\eps)}=o(\eps)~\text{as}~\eps\to0+,
			\label{eqn:tmp2}
		\end{equation}
		where $\tilde Z(\eps):=(\tilde X_1(\eps),\tilde X_2(\eps),\tilde X_3(\eps))\in\calF$. Since the nonzero entries of $X_1(\eps)$ and $X_3(\eps)$ are either $1-\eps$ or $\eps$ and, by \cref{eqn:tmp2}, $\abs{(X_i(\eps))_{jk}-(\tilde X_i(\eps))_{jk}}=o(\eps)$ for $i=1,3$ and $j,k=1,\ldots,K$, we know that $\supp(\tilde X_i(\eps))\supseteq\supp(X_i(\eps))$ for $i=1,3$. Note that $\tilde X_1(\eps)$, $\tilde X_3(\eps)\ge0$. Hence, $\inner{\tilde X_1(\eps),\tilde X_3(\eps)}>0$ because $\inner{X_1(\eps),X_3(\eps)}>0$. This contradicts with $\tilde Z(\eps)\in\calF$ and validates \cref{eqn:tmp1}. Combining the above two cases, we complete the proof.
	\end{proof}

	As a result, the exactness of the $\ell_1$ penalty function for \cref{eqn:MPGCC} cannot be achieved as in the literatures \cite{anandalingam1990solution,campelo2000note,campelo2001theoretical,labbe1998bilevel,labbe2000class,luo1996mathematical,luo1996exact,white1993penalty}. We next rigorously prove this exactness under a rather mild assumption by exploiting the structure of \cref{eqn:MPGCC}.
	
	\section{Main Results}\label{sec:main results}
	
	\par In this part, we elaborate the proof of $\ell_1$ penalty exactness on \cref{eqn:MPGCC}. For a reference of notations, please look back to \cref{subsec:notation and organization}. We make the following weak assumption on $\Omega$ hereafter. 
	
	\begin{assume}\label{assume:compactness}
		The set $\Omega$ is nonempty and compact. 
	\end{assume}

	\par \Cref{assume:compactness} usually automatically holds in real applications \cite{hu2021global,labbe1998bilevel}. With this assumption, the attainment of the optimal value of \cref{eqn:MPGCC penalty} for any $\beta\in\R$ is ensured. 

	\par The first result states that, for any $\beta\in\R$, $\bar S^{\opt}_{\beta}\ne\emptyset$, i.e., there exists at least one extreme-point optimal solution for \cref{eqn:MPGCC penalty}. Its proof hinges on the multi-affine structure of $f_\beta$ and the separability of $\Omega$ with respect to variable blocks. 
	
	\begin{lem}\label{lem:extreme optimal}
		Suppose \Cref{assume:compactness} holds. Then $\bar S^{\opt}_{\beta}\ne\emptyset$ for any $\beta\in\R$. 
	\end{lem}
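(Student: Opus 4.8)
The plan is to exploit the multi-affine structure of $f_\beta$ together with the Cartesian-product structure $\Omega=\prod_{i=1}^n\Omega_i^+$, reducing the problem one block at a time. By \Cref{assume:compactness}, $\Omega$ is nonempty and compact, so $S_\beta^{\opt}\ne\emptyset$; pick any $\z^0=(\x_1^0,\ldots,\x_n^0)\in S_\beta^{\opt}$. I will then successively replace each block by an extreme point of the corresponding $\Omega_i^+$ without increasing the objective value, while keeping the other blocks fixed. The key observation is that when all blocks except $\x_i$ are held fixed, the map $\x_i\mapsto f_\beta(\ldots,\x_i,\ldots)$ is affine in $\x_i$ (since both $f$ is affine in $\x_i$ by hypothesis and the penalty term $p$ is bilinear, hence affine in each block separately). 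Thus the restricted problem $\min\{f_\beta(\z^0_{-i},\x_i):\x_i\in\Omega_i^+\}$ is a linear program over the compact polyhedron $\Omega_i^+$.

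First I would recall the standard fact that a linear function attains its minimum over a nonempty compact polyhedron at an extreme point of that polyhedron: since $\Omega_i^+=\Omega_i\cap\R_+^m$ is a (nonempty, by feasibility of $\z^0$) compact polyhedron, it is the convex hull of its finitely many extreme points $\overline{\Omega_i^+}$, and an affine function on a convex hull attains its minimum at a vertex. Applying this with $i=1$: replace $\x_1^0$ by some $\x_1^1\in\overline{\Omega_1^+}$ with $f_\beta(\x_1^1,\x_2^0,\ldots,\x_n^0)\le f_\beta(\x_1^0,\x_2^0,\ldots,\x_n^0)$; since $\z^0$ is optimal, equality holds and $(\x_1^1,\x_2^0,\ldots,\x_n^0)\in S_\beta^{\opt}$. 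Repeat for $i=2,\ldots,n$ in turn, each time fixing the blocks already updated and the remaining original blocks, obtaining after $n$ steps a point $\z^*=(\x_1^1,\ldots,\x_n^1)$ with $\x_i^1\in\overline{\Omega_i^+}$ for every $i$ and $f_\beta(\z^*)=f_\beta(\z^0)$, so $\z^*\in S_\beta^{\opt}$.

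It remains to check that $\z^*\in\bar\Omega$, i.e., that $\z^*$ is an extreme point of $\Omega$. This is immediate from the product structure: an extreme point of a Cartesian product of polyhedra is exactly a tuple whose components are extreme points of the respective factors, so $\bar\Omega=\prod_{i=1}^n\overline{\Omega_i^+}$, and $\z^*$ lies in this set by construction. Hence $\z^*\in\bar\Omega\cap S_\beta^{\opt}=\bar S_\beta^{\opt}$, which proves $\bar S_\beta^{\opt}\ne\emptyset$. I do not anticipate a serious obstacle here; the only point requiring a little care is ensuring each intermediate restricted polyhedron $\Omega_i^+$ is nonempty and compact (inherited from $\Omega$ being a nonempty compact product) so that the extreme-point minimization is legitimate, and noting that the affineness of $p$ in each block — not merely convexity — is what makes the block-coordinate argument close with equality at an optimum.
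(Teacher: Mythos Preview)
Your proposal is correct and follows essentially the same approach as the paper: both exploit the multi-affine structure of $f_\beta$ and the product form $\Omega=\prod_i\Omega_i^+$ to replace each block in turn by an extreme-point minimizer of the resulting linear program, then use $\bar\Omega=\prod_i\overline{\Omega_i^+}$ to conclude. The paper's version is slightly terser (it cites the Bauer maximum principle for the extreme-point attainment and writes the inequality chain explicitly), but the argument is the same.
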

	\begin{proof}
		By \Cref{assume:compactness}, $S_{\beta}^{\opt}\ne\emptyset$ for any $\beta\in\R$. Pick $\hat\z:=(\hat\x_1,\ldots,\hat\x_n)\in S^{\opt}_{\beta}$. If it happens that $\hat\z\in \bar\Omega$, we are done. Otherwise, consider a linear program
		\begin{equation}
			\displaystyle\min_{\x_1\in\Omega_1^+}~~f_\beta(\x_1,\hat\x_2,\ldots,\hat\x_n).
			\label{eqn:linear program}
		\end{equation}
		Let $\x_1^*\in\overline{\Omega_1^+}$ be its extreme-point optimal solution, whose existence is guaranteed by Bauer maximum principle \cite{bauer1958minimalstellen} and \Cref{assume:compactness}. Then by the optimality of $\x_1^*$ and the feasibility of $\hat\x_1$, one has $f_\beta(\x_1^*,\hat\x_2,\ldots,\hat\x_n)\le f_\beta(\hat\x_1,\ldots,\hat\x_n)$. Next, for any $j\ge2$, consider successively linear program of the form
		$$\displaystyle\min_{\x_j\in\Omega_j^+}~~f_\beta(\x_1^*,\ldots,\x_{j-1}^*,\x_j,\hat\x_{j+1},\ldots,\hat\x_n),$$
		and let $\x_j^*\in\overline{\Omega_j^+}$ be its extreme-point optimal solution, whose existence is ensured similarly. Clearly, it holds, for any $j\in\{2,\ldots,n\}$, that 
		$$f_\beta(\x_1^*,\ldots,\x_{j-1}^*,\x_j^*,\hat\x_{j+1},\ldots,\hat\x_n)\le f_\beta(\x_1^*,\ldots,\x_{j-1}^*,\hat\x_j,\hat\x_{j+1},\ldots,\hat\x_n).$$
		Together, we have a chain
		$$f_\beta(\x_1^*,\ldots,\x_n^*)\le f_\beta(\x_1^*,\ldots,\hat\x_n)\le\cdots\le f_\beta(\x_1^*,\hat\x_2,\ldots,\hat\x_n)\le f_\beta(\hat\x_1,\ldots,\hat\x_n).$$
		Note that due to the separability of $\Omega$, $\bar\Omega=\prod_{i=1}^n\overline{\Omega_i^+}$ and $\z^*:=(\x_1^*,\ldots,\x_n^*)\in \bar\Omega$. Moreover, since $\hat\z$ is an optimal solution of \cref{eqn:MPGCC penalty}, one has $\z^*\in S^{\opt}_{\beta}$ in view of the last inequality chain. This completes the proof. 
	\end{proof}
	
	\par Utilizing the finiteness of $\bar\Omega$ and \Cref{lem:extreme optimal}, we could show the following partial exact penalty result. 
	
	\begin{thm}\label{thm:partial exact penalty}
		Suppose \Cref{assume:compactness} holds. Then there exists $\bar\beta>0$ such that $\bar S^{\opt}_{\beta}\subseteq S^{\opt}\subseteq S^{\opt}_{\beta}$ for any $\beta\ge\bar\beta$.
	\end{thm}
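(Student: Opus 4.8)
The strategy is to transfer the whole comparison onto the \emph{finite} vertex set $\bar\Omega$, using \Cref{lem:extreme optimal} to guarantee that this loses no generality. Write $v$ for the optimal value of \cref{eqn:MPGCC} (we take \cref{eqn:MPGCC} to be feasible, so $\calF\ne\emptyset$) and $v_\beta:=\min_{\z\in\Omega}f_\beta(\z)$ for that of \cref{eqn:MPGCC penalty}; both are attained under \Cref{assume:compactness} by continuity of the multi-affine $f_\beta$. Since $\calF\subseteq\Omega$ and $p\equiv0$ on $\calF$, one has $v_\beta\le v$ for every $\beta$. As $\Omega=\prod_{i=1}^n\Omega_i^+$ is a compact polyhedron, $\bar\Omega=\prod_{i=1}^n\overline{\Omega_i^+}$ is finite; set $M:=\max_{\z\in\Omega}\abs{f(\z)}<\infty$ and $\delta:=\min\{p(\z):\z\in\bar\Omega,\ p(\z)>0\}$ (with the convention $\delta:=1$ if no such extreme point exists), so $\delta>0$. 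The claim will hold with $\bar\beta:=(M+\abs{v})/\delta+1$.

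First I would establish $v_\beta=v$ for $\beta\ge\bar\beta$, which already yields $S^{\opt}\subseteq S_\beta^{\opt}$. By \Cref{lem:extreme optimal}, $\min_{\z\in\Omega}f_\beta(\z)$ is attained at some point of $\bar\Omega$, so $v_\beta=\min_{\z\in\bar\Omega}f_\beta(\z)$. Partition $\bar\Omega$ into $\bar\Omega\cap\calF$ and $\bar\Omega\setminus\calF$: on the first set $f_\beta=f\ge v$; on the second, $f_\beta(\z)=f(\z)+\beta p(\z)\ge -M+\beta\delta\ge v$ whenever $\beta\ge\bar\beta$. Hence $v_\beta\ge v$, and with $v_\beta\le v$ we get $v_\beta=v$. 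Then any $\z^*\in S^{\opt}$ satisfies $f_\beta(\z^*)=f(\z^*)=v=v_\beta$, i.e. $\z^*\in S_\beta^{\opt}$.

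For the reverse inclusion I would fix $\beta\ge\bar\beta$ and consider $\z^*\in\bar S_\beta^{\opt}\subseteq\bar\Omega$, so that $f_\beta(\z^*)=v_\beta=v$. If $\z^*\in\bar\Omega\setminus\calF$ then $p(\z^*)\ge\delta$, whence $f(\z^*)=v-\beta p(\z^*)\le v-\beta\delta$; combined with $f(\z^*)\ge -M$ this forces $\beta\le(M+v)/\delta<\bar\beta$, a contradiction. Therefore $\z^*\in\calF$, and $f(\z^*)=f_\beta(\z^*)=v$ shows $\z^*$ is a feasible point of \cref{eqn:MPGCC} attaining its optimal value, i.e. $\z^*\in S^{\opt}$. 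This proves $\bar S_\beta^{\opt}\subseteq S^{\opt}$ and completes the argument.

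I anticipate no genuine obstacle once \Cref{lem:extreme optimal} is in hand: it is precisely what lets the non-convex and a priori delicate optimal-solution analysis be replaced by finitely many comparisons over $\bar\Omega$, from which a uniform penalty gap $\delta>0$ can be extracted, and everything after that is elementary. The only point requiring a little care is the bookkeeping around the degenerate situation in which every extreme point of $\Omega$ is already feasible for \cref{eqn:MPGCC} (handled above by the convention on $\delta$), where $v_\beta=v$ holds for all $\beta$ and any $\bar\beta>0$ suffices.
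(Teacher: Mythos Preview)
Your proof is correct and follows essentially the same route as the paper's: both hinge on \Cref{lem:extreme optimal} together with the finiteness of $\bar\Omega$ to extract a uniform positive lower bound on $p$ over the infeasible extreme points. Your version is slightly more direct---you produce an explicit threshold $\bar\beta$ and first establish $v_\beta=v$, whereas the paper argues $\bar S_\beta^{\opt}\subseteq S^{\opt}$ by contradiction via a sequence $\beta^{(k)}\to\infty$---but the substance is identical.
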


	\begin{proof}
		We begin with the proof of $\bar S_\beta^{\opt}\subseteq S^{\opt}$ for all sufficiently large $\beta$ by contradiction. Suppose that there exists $\{\beta^{(k)}>0\}\to\infty$ and a corresponding $\{\z^{(k)}\}\subseteq \bar S_{\beta^{(k)}}^{\opt}\setminus S^{\opt}$ (due to \Cref{lem:extreme optimal}). Clearly, if $\z\in S_\beta^{\opt}$ for some $\beta>0$ and $p(\z)=0$, then $\z\in S^{\opt}$. It then holds that $p(\z^{(k)})>0$ for each $k$. Since $\{\Omega_i^+\}_{i=1}^n$ are polyhedral, one has $\abs{\bar\Omega}<\infty$ and
		\begin{equation}
			\inf_{l}f(\z^{(l)})>-\infty~\text{and}~\inf_lp(\z^{(l)})>0.
			\label{eqn:finiteness benefit}
		\end{equation}
		Given any $\z$ feasible for \cref{eqn:MPGCC}, it holds, for any $k$, that
		\begin{align*}
			\infty>f(\z)&=f(\z)+\beta^{(k)}p(\z)\ge f_{\beta^{(k)}}(\z^{(k)})\nonumber\\
			&=f(\z^{(k)})+\beta^{(k)}p(\z^{(k)})\\
			&\ge\inf_lf(\z^{(l)})+\beta^{(k)}\inf_lp(\z^{(l)}).\nonumber
		\end{align*}
		Based upon \cref{eqn:finiteness benefit}, we reach a contradiction after passing $k$ to $\infty$ in the last inequality. Therefore, there exists $\bar\beta>0$ such that $\bar S_\beta^{\opt}\subseteq S^{\opt}$ holds for any $\beta\ge\bar\beta$.
		
		\par Now we show the second inclusion, i.e., given $\beta\ge\bar\beta$, for any $\z^*\in S^{\opt}$ and $\z\in\Omega$, $f_\beta(\z^*)\le f_\beta(\z)$. By \Cref{lem:extreme optimal}, $\bar S_\beta^{\opt}\ne\emptyset$. Pick $\hat\z\in \bar S_\beta^{\opt}$. From the first inclusion, we know that $\hat\z\in S^{\opt}$. Hence $f_\beta(\z^*)=f(\z^*)=f(\hat\z)=f_\beta(\hat\z)$. Since $\hat\z\in S^{\opt}_\beta$, we further have $f_\beta(\hat\z)\le f_\beta(\z)$. Combining with the last equality chain, one concludes that $f_\beta(\z^*)\le f_\beta(\z)$. 
	\end{proof}

	To complete the exactness, it suffices to show the reverse direction: $S_\beta^{\opt}\subseteq S^{\opt}$. We begin with the following lemma, which states that the optimal solution of \cref{eqn:MPGCC penalty} with only one non-extreme variable block must be optimal for \cref{eqn:MPGCC}.

	\begin{lem}\label{lem:one not extreme}
		Suppose \Cref{assume:compactness} holds. For a given $\beta\ge\bar\beta$, let $\hat\z:=(\hat\x_1,\ldots,\hat\x_n)\in S_{\beta}^{\opt}$. If there exists only one index $i\in\{1,\ldots,n\}$ such that $\hat\x_i\notin\overline{\Omega_i^+}$, then $\hat\z\in S^{\opt}$. 
	\end{lem}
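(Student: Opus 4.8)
The plan is to exploit the fact that, after fixing the $n-1$ extreme blocks $\{\hat\x_j\}_{j\ne i}$, the penalty objective $f_\beta(\hat\x_1,\ldots,\hat\x_{i-1},\cdot,\hat\x_{i+1},\ldots,\hat\x_n)$ is an affine function of the remaining block $\x_i$ over the polyhedron $\Omega_i^+$. First I would invoke \Cref{lem:extreme optimal} to produce an extreme-point optimizer $\bar\z\in\bar S_\beta^{\opt}$ and, via \Cref{thm:partial exact penalty}, conclude $\bar\z\in S^{\opt}$, so that $f_\beta(\hat\z)=f_\beta(\bar\z)=f(\bar\z)=\min_{\z\in\calF}f(\z)$; in particular the optimal penalty value equals the optimal MPGCC value, and it suffices to show $p(\hat\z)=0$. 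Since $\hat\z$ minimizes $f_\beta$ and the restriction $\x_i\mapsto f_\beta(\hat\x_1,\ldots,\x_i,\ldots,\hat\x_n)$ is affine over the compact polyhedron $\Omega_i^+$, the whole face of $\Omega_i^+$ on which this affine function attains its minimum consists of optimizers of \cref{eqn:MPGCC penalty}; in particular $\hat\x_i$ lies in the relative interior of such an optimal face, and that face contains at least one extreme point $\x_i^\star\in\overline{\Omega_i^+}$.

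Next I would form $\z^\star:=(\hat\x_1,\ldots,\hat\x_{i-1},\x_i^\star,\hat\x_{i+1},\ldots,\hat\x_n)$, which by the previous paragraph lies in $\bar S_\beta^{\opt}$ (all blocks are now extreme points, using $\bar\Omega=\prod_i\overline{\Omega_i^+}$), hence in $S^{\opt}$ by \Cref{thm:partial exact penalty}; consequently $p(\z^\star)=0$, i.e. $\inner{\x_i^\star,\hat\x_j}=0$ for every $j\ne i$. The key remaining step is to transfer this orthogonality from the extreme point $\x_i^\star$ back to $\hat\x_i$. For this I would argue that \emph{every} extreme point of the optimal face of $\Omega_i^+$ must satisfy the same property: the value $f_\beta(\z^\star)$ is independent of which optimal extreme point $\x_i^\star$ we pick (all equal the common optimal value), so each is an extreme-point optimal solution of \cref{eqn:MPGCC penalty} and therefore, again by \Cref{thm:partial exact penalty}, optimal for \cref{eqn:MPGCC}, whence orthogonal to all $\hat\x_j$, $j\ne i$. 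Writing $\hat\x_i$ as a convex combination $\sum_k \lambda_k \x_i^{(k)}$ of extreme points of its optimal face (Minkowski's theorem on the compact polytope that is this face), we get $\inner{\hat\x_i,\hat\x_j}=\sum_k\lambda_k\inner{\x_i^{(k)},\hat\x_j}=0$ for all $j\ne i$, and since all other pairs $\inner{\hat\x_j,\hat\x_{j'}}$ ($j,j'\ne i$) already vanish (the $\hat\z$ with those blocks fixed had penalty value equal to... — here I must be a little careful, see below), we conclude $p(\hat\z)=0$, hence $\hat\z\in\calF$, and combined with $f(\hat\z)=f_\beta(\hat\z)=\min_\calF f$ this gives $\hat\z\in S^{\opt}$.

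\textbf{Main obstacle.} The subtle point — and the step I expect to require the most care — is establishing that the cross terms among the \emph{fixed} blocks, namely $\inner{\hat\x_j,\hat\x_{j'}}$ for $j,j'\ne i$, vanish. These are not controlled by the affine-restriction argument in the $\x_i$ direction. The clean way around this is to not claim it separately but to observe that $p(\hat\z) = \sum_{j\ne i}\inner{\hat\x_i,\hat\x_j} + p'(\{\hat\x_j\}_{j\ne i})$ where $p'$ collects exactly the terms not involving block $i$; since $\x_i\mapsto f(\z)$ is affine, the penalty term $\beta\sum_{j\ne i}\inner{\x_i,\hat\x_j}$ is also affine in $\x_i$, so the whole objective is affine in $\x_i$, and running the convex-combination argument shows $\sum_{j\ne i}\inner{\hat\x_i,\hat\x_j}=\sum_{j\ne i}\inner{\x_i^\star,\hat\x_j}=0$; then $p(\hat\z)=p'(\{\hat\x_j\}_{j\ne i})=p(\z^\star)=0$ because $\z^\star\in\calF$ and $\z^\star$ shares the blocks $\{\hat\x_j\}_{j\ne i}$ with $\hat\z$. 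This reduces everything to the single identity $p(\z^\star)=0$, which we already have, so no independent handling of the fixed-block cross terms is needed. The only other thing to watch is the degenerate possibility that $\hat\x_i$ is already an extreme point, in which case the lemma's hypothesis is vacuously contradicted and there is nothing to prove — but stating the argument via ``$\hat\x_i$ lies in some optimal face, which has extreme points'' covers this uniformly.
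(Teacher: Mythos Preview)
Your proposal is correct and follows essentially the same approach as the paper: fix the $n-1$ extreme blocks, observe that the remaining block solves an affine minimization over $\Omega_i^+$ so that every extreme point of the optimal face (combined with the fixed blocks) lies in $\bar S_\beta^{\opt}\subseteq S^{\opt}$ by \Cref{thm:partial exact penalty}, and then use affinity of $p$ in block $i$ to transfer $p=0$ back to $\hat\z$. Your direct use of Minkowski's theorem to write $\hat\x_i$ as a convex combination of face vertices is in fact slightly cleaner than the paper's route (which argues $p\equiv 0$ first on segments between vertex pairs, then on $\rbd\hat F_1$, then on a segment through $\hat\x_1$), and your ``main obstacle'' paragraph correctly identifies and dispatches the fixed-block cross terms via $p'(\{\hat\x_j\}_{j\ne i})=p(\z^\star)=0$.
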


	\begin{proof}
		Without loss of generality, we assume $i=1$. Suppose, on the contrary, that $\hat\z\notin S^{\opt}$. Then we must have $p(\hat\z)>0$. Since $\hat\z\in S_{\beta}^{\opt}$, $\hat\x_1$ solves the linear program \cref{eqn:linear program} and $\hat\x_1\in\rbd\Omega_1^+$. Due to $\hat\x_1\notin\overline{\Omega_1^+}$, \cref{eqn:linear program} is degenerate on a face, say $\hat F_1$, of $\Omega_1^+$ where $\hat\x_1$ lies and, therefore, each point in $\hat F_1$ solves \cref{eqn:linear program}. In particular, by \Cref{thm:partial exact penalty}, each extreme point of $\hat F_1$, together with $\{\hat\x_j\}_{j=2}^n$, solves \cref{eqn:MPGCC}. Denote by $\{\bar\x_1^{(i)}\}_{i\in\hat I_1}$ the extreme points of $\hat F_1$, where $\hat I_1\subseteq\N$ is a finite index set. Note that $\hat F_1$ is convex compact. By Minkowski's theorem (see, e.g., \cite[Corollary 18.5.1]{Rockafellar+1970}), $\hat F_1$ is the convex hull of $\{\bar\x_1^{(i)}\}_{i\in\hat I_1}$. Since $\hat\x_1\notin\overline{\Omega_1^+}$, $\abs{\hat I_1}\ge2$.
		
		\par Pick any two elements in $\hat I_1$, e.g., $i=1,2$. Then $(\bar\x_1^{(i)},\hat\x_2,\ldots,\hat\x_n)\in S^{\opt}$ and $p(\bar\x_1^{(i)},\hat\x_2,\ldots,\hat\x_n)=0$ for $i=1,2$. Since the function $p$ is linear with respect to the first block, one must have $p(\x_1,\hat\x_2,\ldots,\hat\x_n)=0$ for any $\x_1\in\ell(\bar\x_1^{(1)},\bar\x_1^{(2)})$.
		
		\par Invoking the arguments in the former paragraph repeatedly, we could deduce that the function $p\equiv0$ on $\rbd\hat F_1$. If $\hat\x_1\in\rbd\hat F_1$, we then reach a contradiction. Otherwise, since $\hat F_1$ is convex compact, $\hat\x_1$ must lies on the line segment connected by two points in $\rbd\hat F_1$. Again, since $p$ is linear with respect to the first block, $p(\hat\z)=0$, leading to a contradiction. The proof is complete.
	\end{proof}

	We then prove the exactness result by mathematical induction. 
	
	\begin{thm}\label{thm:exact penalty}
		Suppose \Cref{assume:compactness} holds.  For a given $\beta\ge\bar\beta$, $S_{\beta}^{\opt}=S^{\opt}$.
	\end{thm}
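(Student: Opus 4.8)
The plan is to deduce \Cref{thm:exact penalty} from \Cref{thm:partial exact penalty,lem:one not extreme} by induction on the number of variable blocks at which a minimizer of \cref{eqn:MPGCC penalty} fails to be an extreme point. Since \Cref{thm:partial exact penalty} already gives $S^{\opt}\subseteq S_\beta^{\opt}$ for $\beta\ge\bar\beta$, the whole task is the reverse inclusion $S_\beta^{\opt}\subseteq S^{\opt}$. For $\hat\z:=(\hat\x_1,\ldots,\hat\x_n)\in S_\beta^{\opt}$ write $k(\hat\z):=\abs{\{i:\hat\x_i\notin\overline{\Omega_i^+}\}}$. I would prove, by strong induction on $k\in\{0,1,\ldots,n\}$, the claim that \emph{every $\hat\z\in S_\beta^{\opt}$ with $k(\hat\z)=k$ lies in $S^{\opt}$}. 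The case $k=0$ is precisely $\bar S_\beta^{\opt}\subseteq S^{\opt}$, which is part of \Cref{thm:partial exact penalty}; the case $k=1$ is exactly \Cref{lem:one not extreme}.

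For the inductive step, fix $k\ge2$, assume the claim for all smaller values, and take $\hat\z\in S_\beta^{\opt}$ with $k(\hat\z)=k$; relabel so that $\hat\x_1\notin\overline{\Omega_1^+}$. It suffices to show $p(\hat\z)=0$: then $\hat\z\in\Omega$ together with $p(\hat\z)=0$ forces $\inner{\hat\x_i,\hat\x_j}=0$ for all $i\ne j$ (each term is nonnegative), so $\hat\z\in\calF$, and the observation already used in the proof of \Cref{thm:partial exact penalty} --- that $\z\in S_\beta^{\opt}$ with $p(\z)=0$ implies $\z\in S^{\opt}$ --- finishes the argument. So suppose, to the contrary, that $p(\hat\z)>0$. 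Because $\hat\z\in S_\beta^{\opt}$ and $\Omega$ is separable across blocks, $\hat\x_1$ solves the linear program \cref{eqn:linear program} with the remaining blocks fixed at $\hat\x_2,\ldots,\hat\x_n$; let $\hat F_1\subseteq\Omega_1^+$ be the optimal face of that linear program, a compact polyhedron containing $\hat\x_1$ on which $f_\beta(\cdot,\hat\x_2,\ldots,\hat\x_n)\equiv f_\beta(\hat\z)$. Since $\hat\x_1$ is not an extreme point of $\Omega_1^+$, it is not an extreme point of the face $\hat F_1$ either; by Minkowski's theorem $\hat\x_1=\sum_{t\in\hat I_1}\lambda_t\bar\x_1^{(t)}$ is a convex combination of the (finitely many) extreme points $\{\bar\x_1^{(t)}\}_{t\in\hat I_1}$ of $\hat F_1$, each of which is also an extreme point of $\Omega_1^+$.

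Now for each $t\in\hat I_1$ the point $\z^{(t)}:=(\bar\x_1^{(t)},\hat\x_2,\ldots,\hat\x_n)$ lies in $\Omega$ and attains the optimal value, $f_\beta(\z^{(t)})=f_\beta(\hat\z)$, so $\z^{(t)}\in S_\beta^{\opt}$; its first block is now extreme while blocks $2,\ldots,n$ are untouched, hence $k(\z^{(t)})=k-1<k$, and the induction hypothesis yields $\z^{(t)}\in S^{\opt}$, so $p(\z^{(t)})=0$. Since $p(\cdot,\hat\x_2,\ldots,\hat\x_n)$ is affine in its first argument (only the cross terms $\inner{\x_1,\hat\x_j}$, $j\ge2$, depend on it), evaluating at the convex combination above gives $p(\hat\z)=\sum_{t\in\hat I_1}\lambda_t\,p(\z^{(t)})=0$, contradicting $p(\hat\z)>0$. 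This closes the induction and establishes $S_\beta^{\opt}\subseteq S^{\opt}$, hence $S_\beta^{\opt}=S^{\opt}$.

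The main obstacle --- and the reason a single application of \Cref{lem:one not extreme} does not suffice --- is that with two or more non-extreme blocks one cannot in general walk directly to an \emph{extreme-point} optimizer of \cref{eqn:MPGCC penalty} by straightening the blocks one at a time, so the argument must be run inductively; the care needed is in verifying that each single-block straightening both keeps the point in $S_\beta^{\opt}$ (using that $\hat F_1$ is the optimal face, on which $f_\beta$ is constant) and lowers $k$ by exactly one (using that the other blocks are left unchanged). Both facts rely on the multi-affine form of $f$ and the blockwise-separable polyhedral structure of $\Omega$, which is exactly what turns the block subproblem into the genuine linear program \cref{eqn:linear program} whose face structure drives the proof.
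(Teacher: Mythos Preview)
Your proof is correct and follows essentially the same inductive strategy as the paper: induct on the number of non-extreme blocks, replace one non-extreme block by extreme points of the optimal face of the corresponding block linear program, invoke the inductive hypothesis to get $p=0$ at those vertices, and use affinity of $p$ in that block to force $p(\hat\z)=0$. Your write-up is in fact slightly cleaner than the paper's, since you pass directly through the Minkowski convex-combination representation rather than the relative-boundary line-segment argument; but the underlying idea is identical.
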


	\begin{proof}
		\par The ``$\supseteq$'' part is ensured by \Cref{thm:partial exact penalty}. Below, we deal with the ``$\subseteq$'' part. We prove by mathematical induction that, for $r\in\{0,1,\ldots,n\}$, any $\hat\z\in S_{\beta}^{\opt}$ with only $r$ indice $\{i_1,\ldots,i_r\}\subseteq\{1,\ldots,n\}$ for which $\hat\x_{i_s}\notin\overline{\Omega_{i_s}^+}$, $s=1,\ldots,r$, must belong to $S^{\opt}$. It is easy to check that if this claim holds true, the desired result will follow. The cases $r=0$ and $r=1$ is valid in view of \Cref{thm:partial exact penalty} and \Cref{lem:one not extreme}, respectively. Now suppose the claim is valid for $r=t$ where $t\in\{1,\ldots,n-1\}$ and we show the $r=t+1$ case. 
		
		\par Without loss of generality, we assume $i_s=s$ for $s=1,\ldots,t+1$. Suppose otherwise $\hat\z\notin S^{\opt}$. Then we have $p(\hat\z)>0$. Since $\hat\z\in S_{\beta}^{\opt}$, $\hat\x_{t+1}$ solves the following linear program
		\begin{equation}
			\min_{\x_{t+1}\in\Omega_{t+1}^+}~~f_\beta(\hat\x_1,\ldots,\hat\x_t,\x_{t+1},\hat\x_{t+2},\ldots,\hat\x_n).
			\label{eqn:linear program xt+1}
		\end{equation}
		Due to $\hat\x_{t+1}\notin\overline{\Omega_{t+1}^+}$, \cref{eqn:linear program xt+1} is degenerate on a face, say $\hat F_{t+1}$, of $\Omega_{t+1}^+$ where $\hat\x_{t+1}$ lies and, therefore, each point in $\hat F_{t+1}$ solves \cref{eqn:linear program xt+1}. By the optimality of objective value, each extreme point of $\hat F_{t+1}$, together with $\{\hat\x_j\}_{j\ne t+1}$, solves \cref{eqn:MPGCC penalty}. Denote by $\{\bar\x_{t+1}^{(i)}\}_{i\in\hat I_{t+1}}$ the extreme points of $\hat F_{t+1}$, where $\hat I_{t+1}\subseteq\N$ is a finite index set, and let $\bar\z^{(i)}:=(\hat\x_1,\ldots,\hat\x_t,\bar\x_{t+1}^{(i)},\hat\x_{t+2},\ldots,\hat\x_n)$ for any $i\in\hat I_{t+1}$. Then $\{\bar\z^{(i)}\}_{i\in\hat I_{t+1}}\subseteq S_{\beta}^{\opt}$. For each $i\in\hat I_{t+1}$, notice that there are only $t$ variable blocks in $\bar\z^{(i)}$ that do not lie in their corresponding extreme point sets. By induction, we get $\{\bar\z^{(i)}\}_{i\in\hat I_{t+1}}\subseteq S^{\opt}$ and, therefore, $p(\bar\z^{(i)})=0$ for any $i\in\hat I_{t+1}$. By similar arguments on $\hat F_{t+1}$ as in \Cref{lem:one not extreme}, we could arrive a contradiction. Hence, the claim is true for $r=t+1$. 
		
		\par By mathematical induction, the claim holds true for any $r\in\{0,1,\ldots,n\}$, which completes the proof.
	\end{proof}

	\section{Conclusions and Perspectives}\label{sec:conclusions}
	
	\par In this paper, we prove the exactness of the $\ell_1$ penalty function for the model \cref{eqn:MPGCC}. Without restrictive assumptions such as the strict complementarity condition or the positive-multiplier nondegeneracy condition in the literatures, we manage to show $S^{\opt}= S^{\opt}_\beta$ for all sufficiently large $\beta$ by exploiting the multi-affine structure of the objective function and generalized complementarity constraints. Our results cover those existing ones for LPCC and apply to the multi-block settings with nonlinear objective functions. We also put forward an instance of \cref{eqn:MPGCC}, the exactness of whose $\ell_1$ penalty function cannot be implied by the existing results, but is ensured by ours.
	
	\par The computational aspects of the model \cref{eqn:MPGCC} deserve future investigation. Specifically, in virtue of the applications in \cite{hu2021global}, it is essential to design tailor-made efficient solvers that produce approximate optimal solutions for \cref{eqn:MPGCC penalty}.
	
	\par Moreover, the results stated in this paper can be easily extended to a class of multi-affine optimization problems in the form
	$$\begin{array}{cl}
		\min\limits_{\z} & \tilde f(\x_1,\ldots,\x_n)\\
		\st & \x_i\in\Omega_i,~i=1,\ldots,n,\\
		 & h_j(\x_1,\ldots,\x_n)=0,~j=1,\ldots,q,
	\end{array}$$
	where $\tilde f$, $h_j:\R^{mn}\to\R$ ($j=1,\ldots,p$) are multi-affine, $\{\Omega_i\}_{i=1}^n$ are polyhedrons; for $j=1,\ldots,q$, $h_j\ge0$ over $\prod_{i=1}^n\Omega_i$. The related $\ell_1$ penalty function $\tilde f_\beta(\z)=\tilde f(\z)+\beta\sum_{j=1}^qh_j(\z)$. For more details on the general multi-affine optimization, see \cite{drenick1992multilinear,gao2020admm} and the references therein.
	
	\section*{Acknowledgements}
	
	The authors would like to thank Professor Huajie Chen for his discussions on the applications in quantum physics. This work was supported by the National Natural Science Foundation of China (Grant Nos. 12125108, 11971466, 11991021, 11991020, 12021001, and 12288201), Key Research Program of Frontier Sciences, Chinese Academy of Sciences (Grant No. ZDBS-LY-7022), and the CAS AMSS-PolyU Joint Laboratory in Applied Mathematics.
	
	\section*{Declaration of Competing Interest}
	
	\par The authors declare that they have no conflicts of interest in this work.
	
	\normalem
	\bibliographystyle{plain}
	\bibliography{ref}
\end{document}